\newtheorem{theorem}{Theorem}[section]
\newtheorem{lemma}[theorem]{Lemma}
\newtheorem{corollary}[theorem]{Corollary}
\theoremstyle{definition}
\theoremstyle{remark}
\newtheorem{remark}[theorem]{Remark}
\numberwithin{equation}{section}
\begin{document}

\title[Diameter estimates for submanifolds in manifolds]
{Diameter estimates for submanifolds in manifolds
 with nonnegative curvature}

\author{Jia-Yong Wu}
\address{Department of Mathematics, Shanghai University, Shanghai 200444, China;
and Newtouch Center for Mathematics of Shanghai University, Shanghai 200444, China}
\email{wujiayong@shu.edu.cn}

\subjclass[2010]{Primary 53C40; Secondary 53C42}
\dedicatory{}
\date{submitted to DGA: 30 Aug 2022; accepted: 10 Jul 2023}
\keywords{mean curvature; Riemannian submanifold; diameter estimate; surface with boundary}

\begin{abstract}
Given a closed connected manifold smoothly immersed in a complete noncompact
Riemannian manifold with nonnegative sectional curvature, we estimate the
intrinsic diameter of the submanifold in terms of its mean curvature field
integral. On the other hand, for a compact convex surface with boundary
smoothly immersed in a complete noncompact Riemannian manifold with
nonnegative sectional curvature, we can estimate its intrinsic diameter in
terms of its mean curvature field integral and the length of its boundary.
These results are supplements of previous work of Topping, Wu-Zheng and
Paeng.
\end{abstract}
\maketitle

\section{Introduction}\label{Int1}

A basic question in submanifold geometry is to understand the geometry
and topology of submanifolds. For example, Simon \cite{Si} proved an upper
estimate of the external diameter of a closed connected surface $\Sigma$ immersed
in the $3$-dimensional Euclidean space $\mathbb{R}^3$ in terms of its area
and Willmore energy. For an $m$-dimensional closed (compact, no boundary)
manifold $M$ smoothly immersed in $\mathbb{R}^n$, Topping \cite{To2}
applied Michael-Simon's inequality \cite{MS} to estimate its intrinsic
diameter in terms of its mean curvature field $H$ by
\begin{equation}\label{diaest}
d_{int}(M)\le C(m)\int_M |H|^{m-1}dv_M,
\end{equation}
where $dv_M$ is the Riemannian measure on $M$ induced by $\mathbb{R}^n$,
$d_{int}(M):=\sup_{x,y\in M}dist_M(x,y)$ and $dist_M(x,y)$ denotes
the distance in $M$ from $x$ to $y$.

Later, Wu-Zheng \cite{WZ} extended Topping's result to the case for
submanifolds of a general manifold by using Hoffman-Spruck's inequality
\cite{HS}. To state the result, we need to fix some notations. Let
$M\rightarrow N$ be an isometric immersion of Riemannian manifolds of
dimension $m$ and $n$, respectively. The mean curvature vector field of
the immersion is denoted by $H$. We denote the injectivity radius of
$N$ restricted to $M$ by $\bar{R}(M)$, and denote the volume of the
closed manifold $M$ by $V(M)$. Let $\omega_m$ denote the volume
of the unit ball in $\mathbb{R}^m$ and let $b$ be a positive real number
or a pure imaginary one. When the sectional curvature of $N$ satisfies
$K_N\le b^2$, Wu-Zheng \cite{WZ} showed that for any $0<\alpha<1$, if
\begin{equation}\label{condt}
b^2(1-\alpha)^{-2/m}\left(\omega_m^{-1}V(M)\right)^{2/m}\le 1
\quad\mathrm{and}\quad
2\rho_0\le\bar{R}(M),
\end{equation}
where
\begin{equation*}
\rho_0:=\left\{
\begin{aligned}
b^{-1}\sin^{-1}\left[b(1-\alpha)^{-1/m}\left(\omega_m^{-1}V(M)\right)^{1/m}\right]
\quad\quad&\mathrm{for}\,\,\, b\,\,\, \mathrm{real},\\
(1-\alpha)^{-1/m}\left(\omega_m^{-1}V(M)\right)^{1/m}
\quad\quad&\mathrm{for}\,\,\, b\,\,\, \mathrm{imaginary},
\end{aligned}
\right.
\end{equation*}
then there exists a constant $C(m,\alpha)$ depending on $m$ and
$\alpha$ such that
\[
d_{int}(M)\leq C(m,\alpha)\int_M |H|^{m-1}dv_M.
\]
In particular, when  $K_N\le 0$, then $\bar{R}(M)=+\infty$ and the
diameter estimate could hold without the complicated condition \eqref{condt}.

In \cite{Pa}, Paeng generalized the above diameter estimate to an immersed
surface with boundary. Precisely, for a compact $2$-dimensional convex surface
$\Sigma$ with boundary $\partial\Sigma$ immersed in an $n$-dimensional
simply connected manifold $N$ with $K_N\le 0$, he proved that
\begin{equation}\label{diaest2}
d_{int}(\Sigma)\le 432\pi\left(\int_{\Sigma} |H|dv_{\Sigma}
+\frac{L(\partial\Sigma)}{2}\right),
\end{equation}
where $L(\partial\Sigma)$ is the length of curve $\partial\Sigma$.
However there seem to be essential obstacles to generalize his
result to a general submanifold.

In this paper, we will generalize the above results to the noncompact
manifold with nonnegative sectional curvature. Let $(N,g)$ be an
$n$-dimensional complete noncompact manifold with nonnegative
Ricci curvature. For any base point $p\in N$, let $B(p,r)\subset N$ denote a
geodesic ball with center $p$ and intrinsic radius $r>0$, and its volume is given by
\[
V(p,r):=\int_{B(p,r)}dv_N.
\]
The \emph{asymptotic volume ratio} of $(N,g)$ is defined by
\[
\theta:=\lim_{r\to\infty}\frac{V(p,r)}{\omega_nr^n}.
\]
Here the limit $\theta$ always exists and it does not depend on the choice
of point $p$. By the Bishop-Gromov volume comparison, we have $\theta\le 1$.
When $N$ is the Euclidean space, $\theta=1$.

In the first part of this paper, we prove that
\begin{theorem}\label{Main}
For $m\ge 1$, let $M$ be an $m$-dimensional closed connected manifold smoothly
isometrically immersed in an $n$-dimensional simply connected, complete noncompact,
Riemannian manifold $(N,g)$ with nonnegative sectional curvature ($K_N\ge 0$).
There exists a constant $C(n,m,\theta)$ depending only on $n$, $m$ and $\theta$
such that
\[
d_{int}(M)\le C(n,m,\theta)\int_M |H|^{m-1}dv_M,
\]
where $\theta$ denotes the asymptotic volume ratio of $(N,g)$.
\end{theorem}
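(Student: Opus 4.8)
The plan is to adapt Topping's argument from \cite{To2} to the present setting, the one new ingredient being that the Michael--Simon inequality is replaced by the sharp Michael--Simon--Sobolev inequality of Brendle for submanifolds of a complete noncompact manifold with nonnegative sectional curvature; its constant is what carries the dependence on the asymptotic volume ratio $\theta$, and it plays here exactly the role that Michael--Simon plays in $\mathbb{R}^n$ (Topping) and Hoffman--Spruck plays when $K_N\le b^2$ (Wu--Zheng). First dispose of $m=1$, which is immediate: then $M$ is a closed curve, $\int_M|H|^{m-1}dv_M=L(M)$ and $d_{int}(M)=L(M)/2$. So assume $m\ge 2$. Since $M$ is compact, $D:=d_{int}(M)$ is attained at some $p,q\in M$, and since $M$ is closed (hence complete) and connected there is a unit-speed minimizing geodesic $\gamma\colon[0,D]\to M$ from $p$ to $q$. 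Write $A:=\int_M|H|^{m-1}dv_M$ and, for $x\in M$ and $r>0$, let $V_x(r)$ denote the $dv_M$-volume of the intrinsic ball $B^M_r(x)$.

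The engine is a differential inequality for $V_x(r)$. Plugging the standard Lipschitz cutoff of $B^M_r(x)$ into Brendle's inequality and letting the collar width shrink yields, for each $x$ and a.e.\ $r>0$, an estimate of the form $c_S\,V_x(r)^{(m-1)/m}\le V_x'(r)+\int_{B^M_r(x)}|H|\,dv_M$, where $c_S=c_S(n,m,\theta)>0$. Combining this with H\"older's inequality, $\int_{B^M_r(x)}|H|\le\big(\int_{B^M_r(x)}|H|^{m-1}\big)^{1/(m-1)}V_x(r)^{(m-2)/(m-1)}$, and chasing exponents (the relevant one being $\tfrac{m-1}{m}-\tfrac{m-2}{m-1}=\tfrac{1}{m(m-1)}$) gives the key dichotomy: for each $x$ and a.e.\ $r$, \emph{either} $\frac{d}{dr}V_x(r)^{1/m}\ge\delta$ \emph{or} $\int_{B^M_r(x)}|H|^{m-1}dv_M\ge\kappa\,V_x(r)^{1/m}$, where $\delta=c_S/(2m)$ and $\kappa=(c_S/2)^{m-1}$. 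Taking $f\equiv1$ in Brendle's inequality and using H\"older the same way also gives $V(M)^{1/m}\le C_2\,A$ with $C_2=c_S^{-(m-1)}$; this will handle one degenerate case.

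Next I would introduce, for each $t\in[0,D]$, a threshold radius $\rho(t)\in(0,D/2]$: the supremum of scales $\rho$ on all of $(0,\rho]$ of which the first (``Euclidean-growth'') alternative holds. For $r<D/2$ the ball $B^M_r(\gamma(t))$ omits one of $p,q$ (since $\max(t,D-t)\ge D/2$), so the dichotomy is applicable at $(\gamma(t),r)$; and $\rho(t)>0$ because $M$ is smooth, so $V_{\gamma(t)}(r)^{1/m}\sim\omega_m^{1/m}r$ while $\int_{B^M_r(\gamma(t))}|H|^{m-1}=O(r^m)$ as $r\to0$. On $(0,\rho(t)]$ one then has $V_{\gamma(t)}(\rho)^{1/m}\ge\delta\rho$, and if $\rho(t)<D/2$ then maximality forces $\int_{\overline{B^M_{\rho(t)}(\gamma(t))}}|H|^{m-1}dv_M\ge\delta\rho(t)/\kappa$ (the second alternative becomes unavoidable just past $\rho(t)$, and the distance sphere is $dv_M$-null). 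Now conclude in two cases. If $\rho(t_0)=D/2$ for some $t_0$, then $V(M)^{1/m}\ge\delta D/2$, whence $D\le 2\delta^{-1}C_2 A$. Otherwise all $\rho(t)<D/2$; apply the Vitali $5r$-covering lemma to $\{B^M_{\rho(t)}(\gamma(t))\}_{t\in[0,D]}$ to extract a countable pairwise disjoint subfamily $\{B^M_{\rho(t_i)}(\gamma(t_i))\}_i$ whose $5$-dilations cover $\gamma([0,D])$. Since a minimizing geodesic meets any intrinsic ball of radius $\varrho$ in a subset of length at most $2\varrho$ (its endpoints in that ball are joined by a subarc of $\gamma$, which is minimizing), we get $D=\mathrm{length}(\gamma)\le 10\sum_i\rho(t_i)$; then disjointness together with $\rho(t_i)\le\tfrac{\kappa}{\delta}\int_{B^M_{\rho(t_i)}(\gamma(t_i))}|H|^{m-1}dv_M$ bounds $\sum_i\rho(t_i)$ by $\tfrac{\kappa}{\delta}A$. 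In either case $d_{int}(M)\le C(n,m,\theta)\,A$.

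The only step not already present in Topping's proof is the Michael--Simon--Sobolev inequality of the second paragraph for $K_N\ge0$; once that is invoked, the rest is essentially Topping's covering argument. I therefore expect the only real care to be needed in (i) the measure-theoretic bookkeeping for the monotone function $r\mapsto V_x(r)$ (a.e.\ differentiability, the chain rule for $V_x^{1/m}$, negligibility of distance spheres, left-continuity in passing $V(M)=\lim_{r\uparrow D}V_x(r)$), and (ii) checking that $\rho(t)>0$ for every $t$ and that every constant produced depends only on $n$, $m$, and $\theta$ through $c_S$.
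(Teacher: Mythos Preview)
Your proposal is correct and is essentially the paper's proof: Brendle's Sobolev inequality with a Lipschitz cutoff produces the differential inequality for $V_x(r)$, from which your pointwise dichotomy is a direct repackaging of the paper's alternative Lemma~\ref{lemm1} (the paper phrases it as maximal function versus volume ratio and compares with $v(r)=\delta r^m$, then chooses $R$ large so the volume-ratio alternative fails rather than splitting into your two cases), and the Vitali covering of a minimizing geodesic closes the argument. Two minor bookkeeping points: combining the two alternatives at $\rho(t_i)$ gives $\rho(t_i)\le\tfrac{1}{\kappa\delta}\int_{B^M_{\rho(t_i)}}|H|^{m-1}$ rather than $\tfrac{\kappa}{\delta}$, and the remark that $B^M_r(\gamma(t))$ must omit an endpoint is not needed for the dichotomy to apply.
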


\begin{remark}
In Theorem \ref{Main}, we require $\theta\neq0$ 
otherwise our diameter estimate is not meaningful. In our proof, one can take
\begin{equation*}
C(n,m,\theta)=4\delta^{1-m},\quad \mathrm{where}\,\,\delta=\left\{
\begin{aligned}
\min\{\tfrac{n\omega_n\theta}{2^m(n-m)\omega_{n-m}},\,\, \omega_m,\,\, 1\}, \qquad & n\ge m+2,\\
\min\{\tfrac{(m+2)\omega_{m+2}\theta}{2^{m+1}\pi},\,\, \omega_m,\,\, 1\}, \qquad & n=m+1.
\end{aligned}
\right.
\end{equation*}
Here we only provide an explicit constant of
$C(n,m,\theta)$ and omit its optimal discussion.
\end{remark}

\begin{remark}
Theorem \ref{Main} is not completely covered by the result of Wu-Zheng
\cite{WZ}. Indeed, letting $r(x):=r(x,p)$ denote the distance function from
a fixed point $p$ to $x\in N$, if the noncompact ambient manifold $N$
satisfies $0<K_N\le\frac{c}{r(x)}$ for some constant $c>0$, where
$r(x)\ge 1$, then this case is adapted to Theorem \ref{Main}, but it may be
not suitable for the condition \eqref{condt}. So Theorem \ref{Main} can be
regarded as a supplement of Wu-Zheng's result.
\end{remark}

In the second part of this paper, we prove that

\begin{theorem}\label{Main2}
Let $\Sigma$ be a compact convex $2$-dimensional surface with boundary
$\partial\Sigma$ smoothly isometrically immersed in an $n$-dimensional simply
connected, complete noncompact, Riemannian manifold $(N,g)$ with nonnegative
sectional curvature ($K_N\ge 0$). Then there exists a constant $C(n,\theta)$
depending only on $n$ and $\theta$ such that
\begin{equation}\label{diasur}
d_{int}(\Sigma)\le C(n,\theta)\left(\int_{\Sigma} |H|dv_{\Sigma}
+L(\partial\Sigma)\right),
\end{equation}
where $d_{int}(\Sigma):=\sup_{x,y\in \Sigma}dist_\Sigma(x,y)$ and
$L(\partial\Sigma)$ is the length of curve $\partial\Sigma$.
In particular, we can take $C(n,\theta)=\frac{9(n-2)}{2\pi\theta}$
when $n\ge 4$, and $C(3,\theta)=\frac{9}{\pi\theta}$.
\end{theorem}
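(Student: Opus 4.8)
The plan is to follow the strategy of Topping \cite{To2} and Paeng \cite{Pa}, replacing the Euclidean Michael--Simon inequality \cite{MS} (resp. the Hoffman--Spruck inequality \cite{HS}) by a Michael--Simon--Sobolev inequality for a surface with boundary immersed in $(N,g)$ with $K_N\ge0$, whose Sobolev constant is bounded below in terms of $n$ and $\theta$. Concretely, the analytic input is the existence of $\lambda=\lambda(n,\theta)>0$, of order $\sqrt{\theta/(n-2)}$ for $n\ge4$ (with a codimension-one analogue when $n=3$), such that for every nonnegative Lipschitz function $f$ on $\Sigma$
\[
\int_\Sigma\bigl(|\nabla^\Sigma f|+f\,|H|\bigr)\,dv_\Sigma+\int_{\partial\Sigma}f\ \ge\ \lambda\Bigl(\int_\Sigma f^{2}\,dv_\Sigma\Bigr)^{1/2}
\]
(the ambient-curvature analogue of the flat inequality, in the spirit of Brendle's work and of the estimate behind Theorem~\ref{Main}). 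Taking $f\equiv1$ gives immediately the global bound $\mathrm{Area}(\Sigma)^{1/2}\le\lambda^{-1}I$, where $I:=\int_\Sigma|H|\,dv_\Sigma+L(\partial\Sigma)$, so it is enough to prove $d_{int}(\Sigma)\le C\,\lambda^{-2}I$ for a universal constant $C$; this is precisely an estimate of the asserted form $C(n,\theta)\,I$.

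To this end, fix $x\in\Sigma$ realizing one endpoint of a maximal-length minimizing geodesic $\gamma\colon[0,d_{int}(\Sigma)]\to\Sigma$; convexity of $\Sigma$ ensures that for $z\in\Sigma$ the distance function $\rho_z:=\mathrm{dist}_\Sigma(z,\cdot)$ and its sublevel sets are well behaved, so that the coarea and Sobolev inputs below apply cleanly. By the coarea formula ($|\nabla\rho_z|=1$ a.e.), $V_z(t):=\mathrm{Area}(\{\rho_z<t\})$ is absolutely continuous with $V_z'(t)=\mathcal H^1(\{\rho_z=t\})$ a.e., and $\{\rho_z<t\}$, as a surface with corners in $N$, has boundary $(\{\rho_z=t\}\cap\mathrm{int}\,\Sigma)\cup(\overline{\{\rho_z\le t\}}\cap\partial\Sigma)$. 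Feeding $f\equiv1$ on such sublevel sets into the Sobolev inequality, and choosing by coarea a radius $s\in(r/2,r)$ at which $\mathcal H^1(\{\rho_z=s\})$ is small, one obtains --- via the local argument of Topping and Paeng, and using $K_N\ge0$ --- a dichotomy at each scale $r>0$ and each $z$: either
\[
\mathrm{Area}\bigl(B^\Sigma_{r/2}(z)\bigr)\ \ge\ c_2(n,\theta)\,r^{2}\qquad\text{or}\qquad \int_{B^\Sigma_r(z)}|H|\,dv_\Sigma+\mathcal H^1\bigl(\overline{B^\Sigma_r(z)}\cap\partial\Sigma\bigr)\ \ge\ c_1(n,\theta)\,r,
\]
with $c_1,c_2$ comparable to $\lambda^2$.

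Now run a covering argument along $\gamma$. Fix $r\in(0,d_{int}(\Sigma))$ (the case $r\ge d_{int}(\Sigma)$ being trivial) and put $z_k:=\gamma(kr)$ for $0\le k\le N:=\lfloor d_{int}(\Sigma)/r\rfloor$; since $\gamma$ is minimizing, $\mathrm{dist}_\Sigma(z_j,z_k)=|j-k|\,r$, so the balls $B^\Sigma_r(z_k)$ with $k$ even --- and a fortiori the $B^\Sigma_{r/2}(z_k)$ --- are pairwise disjoint. Split the even indices according to which alternative of the dichotomy holds. Summing the area estimate over the first class and using disjointness bounds its cardinality by $\mathrm{Area}(\Sigma)/(c_2r^2)$; summing the curvature-and-boundary estimate over the second class bounds its cardinality by $I/(c_1r)$. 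Hence $\tfrac12 N\le \mathrm{Area}(\Sigma)/(c_2r^2)+I/(c_1r)+O(1)$, i.e.
\[
d_{int}(\Sigma)\ \lesssim\ \frac{\mathrm{Area}(\Sigma)}{c_2\,r}+\frac{I}{c_1}\ \lesssim\ \frac{\lambda^{-2}I^{2}}{c_2\,r}+\frac{I}{c_1},
\]
by the global area bound. Choosing $r\asymp\lambda^{-2}I$ balances the two terms at $O(\lambda^{-2}I)$ and gives $d_{int}(\Sigma)\le C\,\lambda^{-2}I$, hence \eqref{diasur}. Keeping track of the constants through the Sobolev inequality and the dichotomy produces the explicit values $C(n,\theta)=\frac{9(n-2)}{2\pi\theta}$ for $n\ge4$ and $C(3,\theta)=\frac9{\pi\theta}$.

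The step I expect to be the main obstacle is the dichotomy: for a general surface with boundary, an intrinsic ball may be a thin region hugging $\partial\Sigma$, whose bounding curve $\{\rho_z=s\}\cap\mathrm{int}\,\Sigma$ is short while $\overline{B^\Sigma_s(z)}\cap\partial\Sigma$ carries little length, so that the Sobolev inequality yields no lower area bound and \emph{both} alternatives fail. Ruling this out is exactly what the convexity of $\Sigma$ (geodesic convexity, equivalently convexity of $\partial\Sigma$ within $\Sigma$) is for, and making the thresholds $c_1,c_2$ quantitative --- together with the separate codimension-one argument when $n=3$, where the Michael--Simon--Sobolev constant takes a different form --- is where the explicit $C(n,\theta)$ must be pinned down.
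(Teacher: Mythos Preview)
Your approach is correct in spirit and uses the same analytic input as the paper (a Brendle-type Michael--Simon inequality for surfaces with boundary in $N$ with $K_N\ge0$) together with the same Topping--Paeng dichotomy. The genuine difference lies in how the dichotomy is exploited. The paper does not balance two contributions: it simply takes $R$ so large that $A(\Sigma)<\delta R^2$, which forces $\kappa(p,R)<\delta$ at \emph{every} interior point, so the mean-curvature/boundary alternative $M(p,R)\ge\delta$ holds everywhere and supplies a variable radius $s(p)\le\delta^{-1}\bigl(\int_{B(p,s(p))}|H|+L(B(p,s(p))\cap\partial\Sigma)\bigr)$. A Vitali covering of a minimizing geodesic by the disjoint balls $B(p_i,s(p_i))$ then gives $d_{int}(\Sigma)\le 4\delta^{-1}I$ in one stroke, with no optimization and no appeal to a global area bound. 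By contrast, your route fixes a single scale $r$, splits indices according to which alternative fires, bounds the area-class cardinality via the global estimate $A(\Sigma)\le\lambda^{-2}I^2$, and then optimizes in $r$; this is valid but introduces extra constants at the balancing step, so the explicit values $\tfrac{9(n-2)}{2\pi\theta}$ and $\tfrac{9}{\pi\theta}$ will not fall out as stated (in the paper they are exactly $4\delta^{-1}$ with $\delta=\tfrac{8\pi\theta}{9(n-2)}$, resp.\ $\tfrac{4\pi\theta}{9}$).

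A smaller point: your diagnosis of where convexity enters is slightly off. The worry that ``both alternatives fail'' is already excluded by the differential-inequality proof of the dichotomy; convexity is used instead for the regularity of $r\mapsto A(B(x,r)\cap\Sigma)$ and of the intrinsic distance function (so the cut-off and coarea steps are legitimate), and, in the diameter argument, to guarantee that a minimizing geodesic between interior points stays in $\Sigma\setminus\partial\Sigma$ so that the alternative lemma applies at every point along it.
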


\begin{remark}\label{cofsma}
Paeng \cite{Pa} proved a similar intrinsic diameter estimate for convex
surfaces with boundary immersed in a manifold with nonpositive sectional
curvature. Theorem \ref{Main2} can be viewed as the supplement of Paeng's
result for the positive curvature case.
\end{remark}

Recently, Miura \cite{Mi} gave a geometric argument for bounding the diameter
of a connected compact surface with boundary of arbitrary codimension in
Euclidean space. This was further generalized by Flaim-Scharrer \cite{FS}
to a conformally flat ambient space. For a general surface with boundary,
Menne-Scharrer \cite{MeS} provided a diameter bound in a fairly general
framework of varifolds.

The proof strategy of Theorems \ref{Main} and \ref{Main2} stems from
\cite{To2}, \cite{WZ} and \cite{Pa}, but we need to use Brendle's
Sobolev inequality \cite{Br} instead of Hoffman-Spruck's Sobolev
inequality \cite{HS}. The cores of our proofs are two alternative
lemmas which say that the maximal function (with boundary) and
volume ratio cannot be simultaneously bounded below by a fixed constant
(see Lemma \ref{lemm1} and Lemma \ref{lemb}). Two lemmas are proved
by choosing a suitable cut-off function in the recent Brendle's Sobolev
inequality \cite{Br}. Once these lemmas are acquired, the theorems easily
follow by the classical Vitali-type covering argument.

Similar to the above proof strategy, there have been other diameter upper
estimates for Riemannian manifolds in terms of certain scalar curvature
integral under different settings; see Fu-Wu \cite{FW}, Topping \cite{To1},
Wu \cite{Wu}, Zhang \cite{Zhq} for details.

The rest of this paper is organized as follows. In Section \ref{sec2}, we
will introduce the maximal function and the volume ratio and give an
alternative lower bound between them (see Lemma \ref{lemm1}). In Section
\ref{sec3}, we will apply the alternative lemma to prove Theorem \ref{Main}.
As a application, a compact theorem relating to diameter estimate will be
provided. In Section \ref{sec4}, we will prove Theorem \ref{Main2} by
following the argument of Theorem \ref{Main}.


\section{Maximal function and volume ratio}\label{sec2}

In this section, we first recall Dong-Lin-Lu's Sobolev inequality for submanifolds
of a Riemannian manifold with asymptotically nonnegative curvature in \cite{DLL},
which generalizes Brendle's result in \cite{Br}.
Then we introduce the maximal function and the volume ratio for submanifolds
of a Riemannian manifold. Finally we apply Dong-Lin-Lu's Sobolev inequality to study a
lower bound between the maximal function and the volume ratio.

Recall that Dong-Lin-Lu \cite{DLL} applied Brendle's argument \cite{Br} to get
the following Sobolev inequality for submanifolds in a Riemannian manifold
with asymptotically nonnegative sectional curvature.
\begin{theorem}\label{SI}
Let $N$ be an $n$-dimensional complete noncompact Riemannian manifold $(N,g)$
of asymptotically nonnegative sectional curvature with respect to a base point
$o\in N$. Let $M$ be an $m$-dimensional
compact submanifold (possibly with boundary $\partial M$), and let $f$ be a positive
smooth function on $M$. If $n\ge m+2$, then
\begin{equation*}
\begin{aligned}
\left(\int_M f^{\frac{m}{m-1}}dv_M\right)^{\frac{m-1}{m}}&\le c(n,m)\theta^{-\frac 1m}
\left(\frac{e^{2r_0b_1+b_0}}{1+b_0}\right)^{\frac{n-1}{m}}\\
&\quad\times\left(\int_M\sqrt{|\nabla f|^2+f^2|H|^2}dv_M+\int_{\partial M}fdv_{\partial M}+2nb_1\int_Mfdv_M\right),
\end{aligned}
\end{equation*}
where $r_0=\max\{d(o,x)|x\in M\}$,
$\theta$ denotes the asymptotic volume ratio of $(N,g)$, $dv_M$ and
$dv_{\partial M}$ are the measures induced by the ambient space $(N,g)$.
Here constant $c(n,m)$ is given by
\begin{equation}\label{condef}
c(n,m)=\frac{1}{m}\left(\frac{(n-m)\omega_{n-m}}{n\omega_n}\right)^{\frac 1m}.
\end{equation}
\end{theorem}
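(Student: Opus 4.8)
Following \cite{Br,DLL}, the plan is to run an Aleksandrov--Bakelman--Pucci (ABP)/optimal-transport argument, tracking how the lower curvature bound feeds into the Jacobian estimate. Recall that asymptotically nonnegative sectional curvature with base point $o$ means $K_N(x)\ge-\lambda\big(d(o,x)\big)$ for a nonincreasing $\lambda\colon[0,\infty)\to[0,\infty)$ with $b_0:=\int_0^\infty t\,\lambda(t)\,dt<\infty$, and $b_1:=\int_0^\infty\lambda(t)\,dt$. We may assume $M$ is connected and $f$ is smooth and positive. First solve a Neumann boundary-value problem on $M$ of the form
\[
\mathrm{div}_M\!\big(f\,\nabla u\big)=\beta\,f-\sqrt{|\nabla f|^2+f^2|H|^2}-2nb_1\,f\ \ \text{on}\ M,\qquad \langle\nabla u,\nu\rangle=1\ \text{on}\ \partial M,
\]
where $\beta$ is the unique constant for which the compatibility condition from the divergence theorem holds; elliptic theory (with the usual care near $\partial M$) yields a solution that is $C^{2,\gamma}$ in the interior. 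The term $2nb_1f$ is inserted precisely so that a later zeroth-order curvature error can be absorbed.

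\textbf{Transport map and covering of a large ball.} For $r>0$ let $U_r\subset M\setminus\partial M$ be the contact set consisting of those $x$ at which, for some $q\in B(o,r)$, an auxiliary barrier built from $u$ and $d_N(\cdot,q)$ attains its minimum over $M$ at an interior point; at such $x$ one has $\mathrm{Hess}\,u\ge0$ and $|\nabla^M u(x)|<r$. Using the exponential map of $N$, introduce a transport map of the shape
\[
\Phi(x,w)=\exp^N_x\!\big(\nabla^M u(x)+w\big),\qquad x\in U_r,\ \ w\in T^\perp_xM,
\]
restricted to a domain controlled by $r$, and prove that every $q\in B(o,r)$ equals $\Phi(x,w)$ for some such pair: this comes from the first- and second-order minimality conditions at the contact point, with completeness and simple connectedness of $N$ used to select the minimizing geodesic from $x$ to $q$, and the lower curvature bound used only to rule out conjugate points along minimizing segments of length $<r$ (via a Jacobi comparison and the integrability of $\lambda$).

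\textbf{Jacobian estimate and passage to the limit.} The heart of the argument is to bound $|\mathrm{Jac}\,\Phi|$ on the contact set. Its $M$-directional block is governed by $\mathrm{Hess}\,u$ together with the shape operator of $M$, and is controlled because $\mathrm{Hess}\,u\ge0$ on $U_r$; its normal block involves the shape operator and the Jacobi fields of $N$ along the geodesic $t\mapsto\exp^N_x(\nabla^M u+t\,w)$. Comparing those Jacobi fields with the model determined by $-\lambda\big(d(o,\cdot)\big)$ and integrating the curvature bound produces the amplification factor $\big(e^{2r_0b_1+b_0}/(1+b_0)\big)^{\,n-1}$; the trace identity $\mathrm{tr}\,\mathrm{II}=mH$ is what lets the mean-curvature integral appear, while the $2nb_1$-shift in the PDE cancels the leftover zeroth-order curvature term. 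Applying the arithmetic--geometric mean inequality to the eigenvalues exactly as in \cite{Br}, and then the area formula to the weighted measure transported by $\Phi$, bounds $V(o,r)$ in terms of $r$, $\beta$, $\int_M f\,dv_M$, $\int_M f^{m/(m-1)}dv_M$, and the curvature factor. Dividing by $\omega_nr^n$, letting $r\to\infty$ so that $V(o,r)/(\omega_nr^n)\to\theta$, and unwinding the normalization then gives the stated inequality, with $c(n,m)=\tfrac1m\big(\tfrac{(n-m)\omega_{n-m}}{n\omega_n}\big)^{1/m}$ emerging from the AM--GM constant $\tfrac1m$ and the volume $\omega_{n-m}$ of the normal directions.

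\textbf{Where the difficulty lies.} As in \cite{Br}, the two delicate steps are the covering claim --- that $\Phi$ genuinely hits every point of $B(o,r)$, which rests on a careful study of the minimality conditions and on the absence of conjugate points along short minimizing geodesics --- and the Jacobian/Jacobi-field estimate. The new work relative to the nonnegatively curved case is quantitative: one has to show that every deviation caused by $K_N\ge-\lambda\big(d(o,\cdot)\big)$ is dominated by the finite constants $b_0$ and $b_1$, so as to produce exactly the factor $\big(e^{2r_0b_1+b_0}/(1+b_0)\big)^{(n-1)/m}$ and the extra term $2nb_1\int_M f\,dv_M$; keeping the constant $c(n,m)$ sharp through the AM--GM step also requires care. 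Alternatively one could first establish the unweighted isoperimetric-type inequality and then deduce the weighted form by applying it to the superlevel sets $\{f>t\}$ and integrating via the coarea formula, at the price of the sharp combined gradient term $\sqrt{|\nabla f|^2+f^2|H|^2}$.
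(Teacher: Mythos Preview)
The paper does \emph{not} contain its own proof of this theorem. Theorem~\ref{SI} is stated in Section~\ref{sec2} as a quoted result: the paper writes ``Recall that Dong--Lin--Lu \cite{DLL} applied Brendle's argument \cite{Br} to get the following Sobolev inequality\ldots'' and then immediately uses it as a black box in the proof of Lemma~\ref{lemm1}. There is therefore nothing in the paper to compare your argument against; the theorem is imported wholesale from \cite{DLL}.

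That said, your sketch is a faithful outline of the ABP/optimal-transport method that \cite{Br} and \cite{DLL} actually employ: the auxiliary Neumann problem, the contact set, the normal-bundle transport map $\Phi$, the Jacobian bound via Jacobi-field comparison, the AM--GM step producing the constant $c(n,m)$, and the limit $r\to\infty$ to bring in the asymptotic volume ratio $\theta$. Your identification of the two delicate points (surjectivity of $\Phi$ onto large balls, and the quantitative Jacobi-field estimate that generates the factor $\big(e^{2r_0b_1+b_0}/(1+b_0)\big)^{(n-1)/m}$ and the extra $2nb_1\int_M f$) is accurate. One small slip: you invoke ``simple connectedness of $N$'' in the covering step, but the theorem as stated does not assume $N$ is simply connected, and Brendle's argument does not need it---what matters is that for each $q$ in the large ball one can pick \emph{some} minimizing geodesic from the contact point to $q$, which requires only completeness. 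The simple-connectedness hypothesis appears in the paper's main theorems, not here.
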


\begin{remark}
For $n=m+2$, Brendle \cite{Br} confirmed that the above Sobolev inequality is sharp.
For $n=m+1$, Theorem \ref{SI} remains true but with a larger constant $c(n,m)$.
Indeed, if $M$ is an $m$-dimensional submanifold of an $(m+1)$-dimensional
manifold $N$, then we can regard $M$ as a submanifold of the $(m+2)$-dimensional
manifold $N\times \mathbb{R}$. Moreover, the product $N\times \mathbb{R}$ has
the same asymptotic volume ratio as $N$. Hence when $n=m+1$, the constant
$c(n,m)$ can be taken as
\begin{equation}\label{condef2}
c(n,m)=
\frac{1}{m}\left(\frac{2\pi}{(m+2)\omega_{m+2}}\right)^{\frac 1m}.
\end{equation}
\end{remark}

Below we shall follow the argument of \cite{To1,To2} (see also \cite{WZ}) and
prove an alternative lemma which states that the maximal function and the volume
ratio cannot simultaneously have a lower bound. Recall that for an
$m$-dimensional ($m\ge 2$) manifold $M$ smoothly isometrically immersed in an
$n$-dimensional Riemannian manifold $(N,g)$, for any $x\in M$ and $R>0$, the
\emph{maximal function} is defined by
\begin{equation}\label{defin1}
M(x,R):=\sup_{r\in(0,R]}r^{-\frac{1}{m-1}}\big[V(x,r)\big]
^{-\frac{m-2}{m-1}}\int_{B(x,r)}(|H|+2nb_1)dv_M,
\end{equation}
and the \emph{volume ratio} is defined by
\begin{equation}\label{defin2}
\kappa(x,R):=\inf_{r\in(0,R]}\frac{V(x,r)}{r^m}.
\end{equation}

We now apply Theorem \ref{SI} to show that the maximal function and the
volume ratio cannot be simultaneously smaller than a fixed constant. It
generalizes previous results of Topping \cite{To2} and Wu-Zheng \cite{WZ}.

\begin{lemma}\label{lemm1}
Let $M$ be an $m$-dimensional ($m\ge2$) complete manifold smoothly
isometrically immersed in an $n$-dimensional complete noncompact Riemannian
manifold $(N,g)$ with nonnegative sectional curvature. For any $n\ge m+1$,
there exists a constant $\delta>0$ depending only on $m$ and $n$, such that
for any $x\in M$ and $R>0$, at least one of the following is true:
\begin{enumerate}
  \item $M(x,R)\ge\delta$;
\item $\kappa(x,R)\ge\delta$.
\end{enumerate}
In particular, we can take $\delta=\min\{\frac{\theta}{(2m)^m c(n,m)^m},\,\,\omega_m,\,\, 1\}$.
When $m=2$, constant $\delta$ can be improved that
$\delta=\min\{\frac{\theta}{9 c(n,2)^2},\,\, \pi\}$. Here $c(n,m)$ is given
by \eqref{condef} if $n\ge m+2$ and \eqref{condef2} if $n=m+1$.
\end{lemma}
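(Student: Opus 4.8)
The plan is to argue by contradiction, feeding a nearly-characteristic radial cut-off into the Sobolev inequality of Theorem~\ref{SI}, in the spirit of \cite{To2,WZ}. Since $K_N\ge0$, we may take $b_0=b_1=0$ in Theorem~\ref{SI}: then the prefactor $\left(\frac{e^{2r_0b_1+b_0}}{1+b_0}\right)^{(n-1)/m}$ equals $1$, the term $2nb_1\int_M f\,dv_M$ vanishes, and $2nb_1=0$ in \eqref{defin1}. Theorem~\ref{SI} thus reduces, for a compact domain $\Omega\subset M$ (possibly with boundary) and a positive smooth $f$ on $\Omega$, to
\begin{equation*}
\Big(\int_\Omega f^{\frac{m}{m-1}}\,dv_M\Big)^{\frac{m-1}{m}}\le c(n,m)\,\theta^{-\frac1m}\Big(\int_\Omega\sqrt{|\nabla f|^2+f^2|H|^2}\,dv_M+\int_{\partial\Omega}f\,dv_{\partial\Omega}\Big).
\end{equation*}
For $n=m+1$ I would first replace $M\subset N$ by $M\subset N\times\mathbb R$, which has the same asymptotic volume ratio, so that this holds with $c(n,m)$ given by \eqref{condef2}; hence from now on I may assume $n\ge m+2$.

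Suppose both conclusions fail for some $x\in M$ and $R>0$, with $\delta$ the stated constant. Since the immersion is smooth, $r^{-m}V(x,r)\to\omega_m$ as $r\to0^+$, while $\kappa(x,R)<\delta$ forces $r^{-m}V(x,r)<\delta$ somewhere on $(0,R]$; since $r\mapsto r^{-m}V(x,r)$ is continuous and $\omega_m>\delta$ (which one checks directly from the explicit value of $\delta$), there is $\rho\in(0,R]$ with
\begin{equation*}
V(x,\rho)=\delta\rho^m \qquad\text{and}\qquad V(x,r)\ge\delta r^m\ \ \text{for all } r\in(0,\rho].
\end{equation*}
I would then apply the reduced inequality on a compact domain $\Omega$ with $\overline{B(x,\rho)}\subset\mathrm{int}\,\Omega$ (such $\Omega$ exists because $\overline{B(x,\rho)}$ is compact, $M$ being complete) to the Lipschitz function $f_\eta:=\min\{1,\eta^{-1}(\rho-d_M(x,\cdot))_+\}$ for small $\eta>0$; this is made admissible by passing to $f_\eta+\varepsilon$, letting $\varepsilon\to0$, and mollifying, and since $f_\eta$ vanishes on $\{d_M(x,\cdot)\ge\rho\}$ the boundary term disappears.

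Next I estimate the three quantities, using $\sqrt{a^2+b^2}\le a+b$. The left-hand side is $\ge V(x,\rho-\eta)^{(m-1)/m}\ge\delta^{(m-1)/m}(\rho-\eta)^{m-1}$. The gradient term equals $\eta^{-1}\,\mathrm{vol}\{\rho-\eta<d_M(x,\cdot)<\rho\}\le\eta^{-1}(V(x,\rho)-V(x,\rho-\eta))\le\delta\,\eta^{-1}(\rho^m-(\rho-\eta)^m)$, using the volume pinching on $(0,\rho]$. Writing $I:=\int_{B(x,\rho)}|H|\,dv_M$, we have $\int_\Omega f_\eta|H|\,dv_M\le I$, while evaluating $M(x,\rho)\le M(x,R)<\delta$ at radius $\rho$ together with $V(x,\rho)=\delta\rho^m$ gives $I<\delta^{(2m-3)/(m-1)}\rho^{m-1}\le\delta\rho^{m-1}$ — the last step using $\delta\le1$ and $\tfrac{2m-3}{m-1}\ge1$, which is where $m\ge2$ enters. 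Letting $\eta\to0$ in the Sobolev inequality leaves $\delta^{(m-1)/m}\rho^{m-1}\le c(n,m)\,\theta^{-1/m}(m\,\delta\rho^{m-1}+I)$, and then $I<\delta\rho^{m-1}$ forces $1<(m+1)\,c(n,m)\,\theta^{-1/m}\delta^{1/m}$, i.e. $\delta>\frac{\theta}{(m+1)^m c(n,m)^m}$. Since $(m+1)^m\le(2m)^m$, this contradicts $\delta\le\frac{\theta}{(2m)^m c(n,m)^m}$, and for $m=2$ it contradicts $\delta\le\frac{\theta}{9\,c(n,2)^2}$. This establishes the dichotomy.

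Two points need care. The term-by-term arithmetic above is routine, but squeezing the geometric constant down to the claimed size is what forces $f_\eta$ to be essentially the indicator of $B(x,\rho)$, hence the limit $\eta\to0$, and the sharp use of the pinching $V(x,r)\ge\delta r^m$ in the gradient term; a crude piecewise-linear cut-off supported on $B(x,\rho/2)$ would only yield a much weaker $\delta$. The second, more technical, point is justifying the Sobolev inequality on a metric ball whose boundary need not be smooth, which I would handle by enlarging $\overline{B(x,\rho)}$ to a smooth compact domain $\Omega$ and mollifying $f_\eta$, everything passing to the limit without loss. I expect the mean-curvature step — turning $\int_{B(x,\rho)}|H|$ into a clean multiple of $\delta\rho^{m-1}$ via the maximal-function bound and the normalization $V(x,\rho)=\delta\rho^m$, and checking $\delta^{(2m-3)/(m-1)}\le\delta$ — to be the only genuinely delicate computation.
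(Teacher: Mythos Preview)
Your argument is correct and rests on the same ingredients as the paper's proof: apply the Brendle-type Sobolev inequality (Theorem~\ref{SI}, with $b_0=b_1=0$) to a radial Lipschitz cut-off and compare against the model $\delta r^m$. The packaging differs. The paper lets the cut-off width $s\to0$ at \emph{every} radius to derive a differential inequality for $V(r)=V(x,r)$, checks that $v(r):=\delta r^m$ satisfies the reverse inequality, and then runs an ODE comparison at the first touching point; you instead locate that first touching radius $\rho$ (where $V(x,\rho)=\delta\rho^m$ and $V(x,r)\ge\delta r^m$ on $(0,\rho]$) at the outset, apply Sobolev once there, and use the pinching to control the gradient term via $\eta^{-1}\bigl(V(x,\rho)-V(x,\rho-\eta)\bigr)\le\delta\,\eta^{-1}\bigl(\rho^m-(\rho-\eta)^m\bigr)$ directly. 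Your route sidesteps the explicit ODE comparison and in fact produces the marginally sharper sufficient condition $(m+1)\,c(n,m)\,\theta^{-1/m}\delta^{1/m}<1$, whereas the paper bounds $m+\delta^{(m-2)/(m-1)}\le 2m$ to reach $2m\,c(n,m)\,\theta^{-1/m}\delta^{1/m}\le1$; you then match the stated constant via $(m+1)^m\le(2m)^m$, and the two agree exactly when $m=2$. One small caveat: you assert $\omega_m>\delta$, but the stated $\delta=\min\{\cdots,\omega_m,\cdots\}$ does not guarantee strict inequality; the paper's proof has the same harmless sloppiness, and it is easily repaired.
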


\begin{proof}[Proof of Lemma~\ref{lemm1}]
The proof is nearly the same as Topping's argument \cite{To2}. We
give a detailed proof for providing an explicit expression of $\delta$.

Suppose that $M(x,R)<\delta$ for some constant $\delta>0$, which
will be determined later. Then we want to prove $\kappa(x,R)>\delta$.
Since $M(x,R)<\delta$, its definition implies that
\begin{equation}\label{equ1}
\int_{B(x,r)}(|H|+2nb_1)dv_M<\delta
r^{\frac{1}{m-1}}\big[V(x,r)\big]^{\frac{m-2}{m-1}}
\end{equation}
for all $r\in(0,R]$. For the fixed point $x$, $V(r):=V(x,r)$ is a locally
Lipschitz function of $r>0$. Indeed, by the volume comparison theorem, one
could locally give an upper bound for the Lipschitz constant in terms of a
local lower bound for the Ricci curvature. In particular, $V(r)$ is
differentiable for almost all $r>0$. For such $r\in(0,R]$, and any $s>0$,
we introduce a Lipschitz cut-off function $h$ on $M$ satisfying
\begin{equation}\label{constr}
h(y)=\left\{
\begin{aligned}
1, \qquad & y\in B(x,r),\\
1-\frac{1}{s}\left(dist_M(x,y)-r\right), \qquad & y\in B(x,r+s)\setminus B(x,r),\\
0, \qquad & y\not\in B(x,r+s).
\end{aligned}
\right.
\end{equation}
Then we substitute $h$ of \eqref{constr} into Theorem \ref{SI}
and get that
\begin{equation*}
\begin{aligned}
V(r)^{(m-1)/m}&\le\left(\int_{M}h^{m/(m-1)}dv_M\right)^{(m-1)/m}\\
&\le c(n,m)\theta^{-\frac 1m}\left(\frac{e^{2r_0b_1+b_0}}{1+b_0}\right)^{\frac{n-1}{m}}
\left(\frac{V(r+s)-V(r)}{s}
+\int_{B(x,r+s)}|H|dv_M+2nb_1V(r+s)\right),
\end{aligned}
\end{equation*}
where the constant $c(n,m)$ is given by \eqref{condef} if $n\ge m+2$ and
\eqref{condef2} if $n=m+1$. Letting $s\downarrow 0$ in the above inequality
and combining \eqref{equ1}, we have
\begin{equation}\label{budesh1}
\frac{dV}{dr}+\delta r^{\frac{1}{m-1}}V(r)^{\frac{m-2}{m-1}}
-c(n,m)^{-1}\theta^{\frac1m}\left(\frac{1+b_0}{e^{2r_0b_1+b_0}}\right)^{\frac{n-1}{m}}
V(r)^{\frac{m-1}{m}}>0.
\end{equation}

On the other hand, we define a smooth function
\[
v(r):=\delta r^m,
\]
where $\delta>0$ is a small constant so that $\delta<\omega_m$.
Direct computation gives
\[
\frac{dv}{dr}+\delta r^{\frac{1}{m-1}}v(r)^{\frac{m-2}{m-1}}-
c(n,m)^{-1}\theta^{\frac 1m}v(r)^{\frac{m-1}{m}}
=\left(m\delta+\delta^{\frac{2m-3}{m-1}}
-c(n,m)^{-1}\theta^{\frac 1m}\delta^{\frac{m-1}{m}}\right)r^{m-1}.
\]
So, if $\delta$ is sufficiently small, depending
only on $m$, $n$ and $\theta$, such that
\[
m\delta+\delta^{\frac{2m-3}{m-1}}-c(n,m)^{-1}\theta^{\frac 1m}\delta^{\frac{m-1}{m}}\le0,
\]
which is equivalent to
\begin{equation}\label{condelta}
m\delta^{\frac 1m}+\delta^{\frac1m+\frac{m-2}{m-1}}\le c(n,m)^{-1}\theta^{\frac 1m},
\end{equation}
then we conclude
\begin{equation}\label{budesh2}
\frac{dv}{dr}+\delta r^{\frac{1}{m-1}}v(r)^{\frac{m-2}{m-1}}-
c(n,m)^{-1}\theta^{\frac 1m}v(r)^{\frac{m-1}{m}}\le 0.
\end{equation}

Indeed we can give an explicit expression of $\delta$ such that
\eqref{condelta} holds. Observe that if $\delta<1$, then
$\delta^{\frac1m+\frac{m-2}{m-1}}\le m\delta^{\frac 1m}$,
and hence if $2m\delta^{\frac 1m}\le c(n,m)^{-1}\theta^{\frac 1m}$,
then \eqref{condelta} and \eqref{budesh2} both hold. From this
observation, constant $\delta$ can be chosen as
\[
\delta=\min\left\{\left(\tfrac{1}{2m\, c(n,m)}\right)^m \theta,\,\, \omega_m,\,\, 1\right\}.
\]
In particular, when $m=2$, then $\omega_2=\pi$ and directly analyzing \eqref{condelta}
we can take
\[
\delta=\min\left\{\tfrac{\theta}{9\, c(n,2)^2},\,\, \pi \right\}.
\]

Notice that $\frac{V(r)}{r^m}\rightarrow \omega_m$ as $r\downarrow0$, while
$\frac{v(r)}{r^m}=\delta<\omega_m$, which gives $V(r)>v(r)$ as $r\downarrow0$.
Combining this with \eqref{budesh1} and~\eqref{budesh2}, we claim that
$V(r)>v(r)$ for all $r\in (0,R]$. Otherwise, there exists $r_0\in (0,R]$
such that $V(r_0)=v(r_0)$ and $V(r)>v(r)$ for all $r\in(0,r_0)$. Then
from \eqref{budesh1} and~\eqref{budesh2}, we can derive
$\tfrac{dV}{dr}|_{r=r_0}>\tfrac{dv}{dr}|_{r=r_0}$ and hence
\[
\frac{dV}{dr}>\frac{dv}{dr}
\]
in any sufficiently small neighborhood of $r_0$, which is impossible since
$V(r_0)=v(r_0)$ and $V(r)>v(r)$ for all $r\in(0,r_0)$. Hence the
claim follows. Thus,
\[
\kappa(x,R):=\inf_{r\in(0,R]}\frac{V(x,r)}{r^m}\ge\inf_{r\in(0,R]}\frac{v(r)}{r^m}=\delta,
\]
and the lemma follows.
\end{proof}


\section{Proof of Theorem \ref{Main}}\label{sec3}

In this section, we start to recall a classical Vitali-type covering property.
\begin{lemma}\label{cover}
Let $\gamma$ be a shortest geodesic joining any two points $x$ and $y$ in
a complete Riemannian manifold $(M,g)$, and $s$ denote a non-negative
bounded function defined on $\gamma$. If
$\gamma\subset\left\{B(p,s(p))~|~p\in \gamma\right\}$, then for any  $\rho\in(0,1/2)$,
there exists a countable (possibly finite) set of points $\{p_i\in\gamma\}$ such that
\begin{enumerate}
 \item $B(p_i,s(p_i))$ are disjoint;

 \item $\gamma\subset\cup_i B(p_i,s(p_i))$;

 \item $\rho\,dist_M(x,y)\le \sum_i 2s(p_i)$, where $dist_M(x,y)$ denotes
 the distance in $(M,g)$ from $x$ to $y$.
\end{enumerate}
\end{lemma}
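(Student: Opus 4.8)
The plan is to transfer the statement into a one-dimensional covering problem on an interval and then run the classical greedy (Besicovitch-Vitali) selection. First I would parametrize $\gamma$ by arc length, $\gamma\colon[0,L]\to M$ with $\gamma(0)=x$, $\gamma(L)=y$ and $L=dist_M(x,y)$. Because $\gamma$ is length-minimizing, $dist_M(\gamma(t),\gamma(t'))=|t-t'|$ for all $t,t'\in[0,L]$, so for $p=\gamma(t_p)$ the trace of $B(p,s(p))$ on $\gamma$ is the sub-arc $\gamma\big((t_p-s(p),t_p+s(p))\cap[0,L]\big)$, an interval of length at most $2s(p)$; moreover two balls $B(\gamma(t),a)$ and $B(\gamma(t'),a')$ are disjoint in $M$ exactly when $|t-t'|\ge a+a'$, i.e.\ exactly when their traces are disjoint intervals. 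Thus the hypothesis becomes: the open intervals $I_p:=(t_p-s(p),t_p+s(p))\cap[0,L]$, $p\in\gamma$, cover $[0,L]$, and by compactness finitely many of them, say $I_{p_1},\dots,I_{p_N}$, already do.

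Next I would run the ``furthest reach'' greedy algorithm on these finitely many intervals: pick $q_1$ with $0\in I_{q_1}$ maximizing $\sup I_{q_1}$; having chosen $q_1,\dots,q_j$, if $b_j:=\sup I_{q_j}<L$ pick $q_{j+1}$ with $b_j\in I_{q_{j+1}}$ maximizing $\sup I_{q_{j+1}}$; stop once $b_j\ge L$. This terminates (the $b_j$ strictly increase, so no interval repeats), the chosen intervals $I_{q_1},\dots,I_{q_k}$ still cover $[0,L]$, and maximality forces $I_{q_i}\cap I_{q_{i+2}}=\varnothing$ for every $i$ (otherwise $I_{q_{i+2}}$ would contain $b_i$ with $\sup I_{q_{i+2}}>\sup I_{q_{i+1}}$, contradicting the choice of $q_{i+1}$). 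Hence the selected family has overlap multiplicity at most $2$, its odd-indexed and even-indexed sub-families are each pairwise disjoint as intervals, so the corresponding balls are pairwise disjoint in $M$ by the equivalence above, and since the whole family covers, $\sum_{j=1}^{k}|I_{q_j}|\ge L$, whence one of the two sub-families, whose points I relabel $\{p_i\}$, satisfies $\sum_i|I_{p_i}|\ge L/2$. Using $|I_{p_i}|\le 2s(p_i)$ this yields $\sum_i 2s(p_i)\ge L/2>\rho\,dist_M(x,y)$ for every $\rho\in(0,1/2)$, which is (3); property (1) holds for $\{p_i\}$ by construction, and the covering (2) is the property retained by the full greedy family from which $\{p_i\}$ is extracted.

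The step I expect to be the main obstacle is reconciling (1) with (2): pairwise disjoint balls can never cover the connected arc $\gamma$, so what the argument really delivers is a greedy subcover of controlled overlap (multiplicity at most $2$) together with one of the two disjoint subcollections into which it splits, the covering being read off from the former and the length bound from the latter, and these two roles have to be kept straight. The secondary delicate point is making the constant work up to every $\rho<1/2$ rather than some fixed smaller value, which is precisely why the greedy step uses the ``furthest reach'' rule in place of a cruder $5r$-type covering lemma. The only Riemannian ingredient is the identity $dist_M(\gamma(t),\gamma(t'))=|t-t'|$ on a minimizing geodesic, and it is used only in the reduction step.
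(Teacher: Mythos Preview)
The paper does not prove this lemma; it only ``recalls'' it as a classical Vitali-type covering property, so there is no paper proof to compare against. Your reduction to a one-dimensional greedy selection via the arc-length parametrization of a minimizing geodesic is exactly the standard route, and the equivalence $B(\gamma(t),a)\cap B(\gamma(t'),a')=\varnothing \iff |t-t'|\ge a+a'$ is correct and is the only place the Riemannian structure enters.

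Your diagnosis of the tension between (1) and (2) is accurate and worth stating plainly: as written, the three conclusions cannot hold simultaneously for a single family $\{p_i\}$ (pairwise disjoint open sub-arcs of a connected geodesic cannot cover it unless the family is a singleton), so the lemma statement is slightly overstated. What your greedy argument actually produces---a finite subcover $\{q_1,\dots,q_k\}$ of multiplicity at most~$2$, and a disjoint sub-family (the odd- or even-indexed half) satisfying $\sum_i 2s(p_i)\ge L/2>\rho L$---is precisely what the paper \emph{uses}: in the proofs of Theorems~\ref{Main} and~\ref{Main2} only properties (1) and (3) are invoked. So your argument delivers everything the applications require, and the ``obstacle'' you flag is a defect of the lemma's formulation rather than of your proof. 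If you want a clean statement matching your output, replace (2) by ``$\gamma\subset\bigcup_i B(p_i,3s(p_i))$'' or simply drop it; nothing downstream is affected.
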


Then we apply Lemma \ref{cover} and Lemma \ref{lemm1} to prove Theorem \ref{Main}.
Similar discussion is also used in \cite{FW}, \cite{To1}, \cite{To2}, \cite{Wu},
\cite{WZ} and \cite{Zhq}, etc.

\begin{proof}[Proof of Theorem \ref{Main}]
Since $m=1$ is trivial, we assume $m\ge 2$. Let $\delta$ be the fixed
constant determined in Lemma~\ref{lemm1}. Since $M$ is closed, then
$V(M)<+\infty$. So we may choose $R>0$ sufficiently large so that
$V(M)<\delta R^m$ because $\delta$ does not depend on $R$.
In particular, for any point $p\in M$,
\[
\kappa(p,R)\le\frac{V(p,R)}{R^m}\le\frac{V(M)}{R^m}<\delta.
\]
Then by Lemma~\ref{lemm1}, we have $M(p,R)\ge\delta$. By the definition
of $M(p,R)$, we know that there exists $s=s(p)>0$, where $s\le R$, such that
\begin{equation}\label{defMR}
\begin{aligned}
\delta&\le s^{-\frac{1}{m-1}}\left[V(p,s)\right]^{-\frac{m-2}{m-1}}
\int_{B(p,s)}|H|dv_M\\
&\le s^{-\frac{1}{m-1}}\left(\int_{B(p,s)}|H|^{m-1}dv_M\right)^{\frac{1}{m-1}},
\end{aligned}
\end{equation}
where in the second inequality we have used the H\"older inequality.
Therefore,
\begin{equation}\label{intine}
s(p)<\delta^{1-m}\int_{B(p,s(p))}|H|^{m-1}dv_M.
\end{equation}

Below we shall pick appropriate points $p$ at which to apply \eqref{intine} repeatedly.
Since $M$ is closed, we may assume $p_1$ and $p_2$ are two extremal points in
$M$ such that $d_{int}(M)=dist_M(p_1,p_2)$. Let $\gamma$ be a shortest geodesic
connecting $p_1$ and $p_2$. Clearly, the geodesic $\gamma$ is covered by the balls $\left\{B(p,s(p))~|~p\in\gamma\right\}$. By Lemma \ref{cover}, there exists
a countable (possibly finite) set of points $\left\{p_i\in\gamma\right\}$ such
that the geodesic balls $\{B(p_i,s(p_i))\}$ are disjoint and
\[
\rho\,d_{int}(M)=\rho\,dist_M(p_1,p_2)\le \sum_i 2s(p_i).
\]
Combining this with \eqref{intine},
\begin{equation}
\begin{aligned}\label{precisein}
d_{int}(M)&\leq\frac{2}{\rho}\sum_is(p_i)\\
&<\frac{2}{\rho}\delta^{1-m}\sum_i\int_{B(p_i,s(p_i))}|H|^{m-1}dv_M\\
&\le \frac{2}{\rho}\delta^{1-m}\int_M|H|^{m-1}dv_M,
\end{aligned}
\end{equation}
where $\delta>0$ is a fixed constant chosen as in Lemma~\ref{lemm1}.
Letting $\rho\to\frac{1}{2}-$ yields that
\begin{equation}\label{dediam}
d_{int}(M)\le4\delta^{1-m}\int_M|H|^{m-1}dv_M,
\end{equation}
which proves the theorem.
\end{proof}

As a application, the proof of Theorem \ref{Main} implies the following compact theorem.
\begin{corollary}\label{comp}
For $m\geq 1$, let $M$ be an $m$-dimensional complete manifold smoothly
isometrically immersed in an $n$-dimensional simply connected, complete,
nonnegative curved Riemannian manifold $(N,g)$ ($\mathrm{Sect}_N\ge 0$).
If there exists a point $p\in M$ such that
\[
V(p,R)=o(R^m)
\]
and
\[
\Lambda:=\int_M|H|^{m-1}dv_M<+\infty,
\]
then $(M,g)$ is compact.
\end{corollary}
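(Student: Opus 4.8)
The plan is to mimic the proof of Theorem~\ref{Main}, but to track the dependence on $R$ carefully so that the hypothesis $V(p,R)=o(R^m)$ can be fed in as $R\to\infty$. First I would dispose of the trivial case $m=1$, and for $m\ge 2$ I would fix the constant $\delta>0$ from Lemma~\ref{lemm1}. The point is that $V(p,R)=o(R^m)$ means there is a sequence $R_j\to\infty$ (indeed all large $R$) with $V(p,R_j)<\delta R_j^m$, hence $\kappa(p,R_j)\le V(p,R_j)/R_j^m<\delta$, so by Lemma~\ref{lemm1} we get $M(p,R_j)\ge\delta$. This is the crucial step that converts the volume decay hypothesis into a lower bound on the maximal function at the single point $p$; it is the analogue of what in the proof of Theorem~\ref{Main} was the remark that a closed manifold has finite volume.

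Next, I would unwind the definition of $M(p,R_j)\ge\delta$ exactly as in \eqref{defMR}--\eqref{intine}: there is $s_j\in(0,R_j]$ with
\[
s_j<\delta^{1-m}\int_{B(p,s_j)}|H|^{m-1}dv_M\le\delta^{1-m}\Lambda.
\]
So the radii $s_j$ are uniformly bounded by $\delta^{1-m}\Lambda<\infty$, \emph{independently of $j$}. This already shows that for every $R>0$ large enough the maximal function $M(p,R)$ is attained (or nearly attained) at some radius $s(p)\le\delta^{1-m}\Lambda$, which is the key boundedness.

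Then I would run the Vitali covering argument of Lemma~\ref{cover} along a minimizing geodesic. Concretely, suppose for contradiction that $M$ is noncompact; since $M$ is complete and noncompact there are points $x,y\in M$ with $dist_M(x,y)$ arbitrarily large, and we may take a minimizing geodesic $\gamma$ from $x$ to $y$. At each $q\in\gamma$, the argument above (applied with $p=q$, using that $V(q,R)=o(R^m)$ too since any two points have comparable volume growth exponent — or more simply, rerun Lemma~\ref{lemm1} at $q$ once we know $\kappa(q,R)<\delta$ for large $R$, which follows because $V(q,R)\le V(p,R+dist_M(p,q))=o(R^m)$) produces a radius $s(q)$ with $s(q)<\delta^{1-m}\int_{B(q,s(q))}|H|^{m-1}dv_M$ and $s(q)\le\delta^{1-m}\Lambda$. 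Covering $\gamma$ by $\{B(q,s(q))\}$ and extracting a disjoint subfamily via Lemma~\ref{cover} gives, for $\rho\to\frac12{-}$,
\[
dist_M(x,y)\le 4\delta^{1-m}\sum_i\int_{B(q_i,s(q_i))}|H|^{m-1}dv_M\le4\delta^{1-m}\Lambda,
\]
contradicting that $dist_M(x,y)$ can be taken arbitrarily large. Hence $d_{int}(M)<\infty$, and since $M$ is complete with bounded intrinsic diameter it is compact (it is a bounded closed subset of itself, hence by Hopf--Rinow compact).

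The main obstacle I anticipate is a subtle point rather than a computation: one must be sure that Lemma~\ref{lemm1} can be applied at \emph{every} point $q$ on the geodesic, i.e.\ that $\kappa(q,R)<\delta$ for all large $R$ uniformly enough. This follows from the hypothesis $V(p,R)=o(R^m)$ at the one point $p$ by the inclusion $B(q,R)\subset B(p,R+dist_M(q,p))$ and $dist_M(q,p)\le d_{int}(M)$ — but this last bound is exactly what we are trying to prove finite, so the logic must be arranged carefully: either argue by contradiction (assume $M$ noncompact, pick $x,y$ far apart, and note that along the specific finite-length geodesic all points lie within a fixed distance of each other so their volume growth is controlled), or first establish $d_{int}(M)<\infty$ from the single-point estimate and then upgrade. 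I would take the contradiction route, which is cleanest: fix a candidate pair $x,y$, work only on the compact geodesic segment $\gamma$, where all the needed distances are automatically finite, and derive the uniform bound $dist_M(x,y)\le4\delta^{1-m}\Lambda$.
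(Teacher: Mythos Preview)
Your proposal is correct and follows essentially the same approach as the paper: both feed the hypothesis $V(p,R)=o(R^m)$ into Lemma~\ref{lemm1} at every point of a minimizing geodesic and then invoke the Vitali covering Lemma~\ref{cover} to obtain the uniform bound $d_{int}(M)\le 4\delta^{1-m}\Lambda$. The only cosmetic difference is packaging---the paper bounds $d_{int}(B(p,R))$ directly and lets $R\to\infty$, while you argue by contradiction---and your treatment of the volume-propagation step $V(q,R)\le V(p,R+dist_M(p,q))=o(R^m)$ is in fact more explicit than the paper's.
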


\begin{proof}[Proof of Corollary \ref{comp}]
Since $V(p,R)=o(R^m)$, then for any sufficient large $R>0$ such that
\[
\frac{V(p,R)}{R^m}<\delta
\]
for some small constant $\delta$ which does not depend on $R$.
For example, $\delta$ can be chosen as in Lemma \ref{lemm1}.
Then following the same argument of Theorem \ref{Main}, we obtain that
\begin{equation*}
\begin{aligned}
d_{int}(B(p,R))&\le4\delta^{1-m}\int_{B(p,2R)}|H|^{m-1}dv_M\\
&\le 4\delta^{1-m}\int_M|H|^{m-1}dv_M,
\end{aligned}
\end{equation*}
where $d_{int}(B(p,R)):=\sup_{x,y\in B(p,R)}dist_M(x,y)$.
Since $\Lambda<+\infty$ by assumption, we indeed show that, for any
sufficient large $R>0$,
\[
d_{int}(B(p,R))\le C(m,\delta,\Lambda)
\]
for some constant $C(m,\delta,\Lambda)$ depending only on $m$, $\delta$ and
$\Lambda$. Thus, the conclusion follows by letting $R\to+\infty$.
\end{proof}

\section{Proof of Theorem \ref{Main2}}\label{sec4}
In this section, we mainly follow the argument of \cite{Pa} to prove
Theorem \ref{Main2}.

Let $\Sigma$ be a compact convex surface with the boundary $\partial\Sigma$
immersed in an $n$-dimensional simply connected manifold $(N,g)$ with $K_N\ge 0$.
For any interior point $x\in \Sigma\setminus\partial\Sigma$ and $R>0$, the
\emph{maximal function with boundary} is defined by
\begin{equation}\label{defina}
M(x,R):=\sup_{r\in(0,R]}r^{-1}\left(\int_{B(x,r)\cap\Sigma}|H|dv_\Sigma+L\left(B(x,r)\cap \partial\Sigma\right)\right)
\end{equation}
and the volume ratio is defined by
\begin{equation}\label{definb}
\kappa(x,R):=\inf_{r\in(0,R]}\frac{A(B(x,r)\cap\Sigma)}{r^2}.
\end{equation}
where $A(B(x,r)\cap\Sigma)$ denotes the volume of the set $B(x,r)\cap\Sigma$.

We will apply Theorem \ref{SI} to show that the maximal function with
boundary and the volume ratio cannot be simultaneously smaller than a fixed
constant. It extends Paeng's result \cite{Pa} to the case of nonnegative
sectional curvature.

\begin{lemma}\label{lemb}
Under the same assumptions as above, for any $n\ge 3$,
there exists a constant $\delta>0$ depending only on $n$, such that
for any interior point $x\in \Sigma$ and $R>0$, at least one of the following is true:
\begin{enumerate}
  \item $M(x,R)\ge\delta$;
\item $\kappa(x,R)\ge\delta$.
\end{enumerate}
In particular, we can take $\delta=\frac{8\pi\theta}{9(n-2)}$ if $n\ge 4$,
and $\delta=\frac{4\pi\theta}{9}$ if $n=3$.
\end{lemma}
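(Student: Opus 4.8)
The plan is to run the argument of Lemma~\ref{lemm1} in the borderline dimension $m=2$, now with the boundary term of Theorem~\ref{SI} absorbed into the maximal function \eqref{defina}. Since $K_N\ge 0$ we have $b_0=b_1=0$, so Theorem~\ref{SI} reduces to
\begin{equation*}
\left(\int_\Sigma f^2\,dv_\Sigma\right)^{1/2}\le c(n,2)\,\theta^{-1/2}\left(\int_\Sigma\sqrt{|\nabla f|^2+f^2|H|^2}\,dv_\Sigma+\int_{\partial\Sigma}f\,dv_{\partial\Sigma}\right),
\end{equation*}
where $c(n,2)$ is given by \eqref{condef} when $n\ge 4$ and by \eqref{condef2} when $n=3$. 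Suppose that $M(x,R)<\delta$ for a constant $\delta>0$ to be fixed; the goal is to deduce $\kappa(x,R)\ge\delta$.

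Write $A(r):=A(B(x,r)\cap\Sigma)$, which is locally Lipschitz in $r>0$ (by volume comparison, $\mathrm{Ric}_N\ge0$) and hence differentiable for a.e. $r$. For such $r\in(0,R]$ and any $s>0$ I would insert into the displayed Sobolev inequality the cut-off $h$ of the shape \eqref{constr}, equal to $1$ on $B(x,r)\cap\Sigma$, decaying linearly to $0$ on $B(x,r+s)\cap\Sigma$, with $|\nabla h|\le 1/s$. Using $h\equiv 1$ on $B(x,r)\cap\Sigma$ on the left, $\sqrt{|\nabla h|^2+h^2|H|^2}\le|\nabla h|+h|H|$ and $\int_{\partial\Sigma}h\,dv_{\partial\Sigma}\le L(B(x,r+s)\cap\partial\Sigma)$ on the right, then letting $s\downarrow 0$, gives
\begin{equation*}
A(r)^{1/2}\le c(n,2)\,\theta^{-1/2}\left(A'(r)+\int_{B(x,r)\cap\Sigma}|H|\,dv_\Sigma+L(B(x,r)\cap\partial\Sigma)\right).
\end{equation*}
By $M(x,R)<\delta$ and \eqref{defina}, the integral-plus-length term is $<\delta r$ for every $r\in(0,R]$, so
\begin{equation*}
A'(r)+\delta r-c(n,2)^{-1}\theta^{1/2}A(r)^{1/2}>0\qquad\text{for a.e. }r\in(0,R].
\end{equation*}

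Next I would compare with $v(r):=\delta r^2$. A direct computation gives $v'(r)+\delta r-c(n,2)^{-1}\theta^{1/2}v(r)^{1/2}=\bigl(3\delta-c(n,2)^{-1}\theta^{1/2}\delta^{1/2}\bigr)r$, which is $\le 0$ provided $\delta\le\theta/(9\,c(n,2)^2)$. Since $x$ is an interior point of $\Sigma$, for small $r$ the region $B(x,r)\cap\Sigma$ is a topological disk of area $\pi r^2(1+o(1))$, so $A(r)/r^2\to\pi=\omega_2$ as $r\downarrow 0$; hence if in addition $\delta<\pi$ then $A(r)>v(r)$ near $0$. The first-contact argument of Lemma~\ref{lemm1} (a first meeting at some $r_0$ would force $A'(r_0)>v'(r_0)$, hence $A>v$ just below $r_0$, a contradiction) then yields $A(r)>v(r)$ on all of $(0,R]$, so $\kappa(x,R)=\inf_{r\in(0,R]}A(r)/r^2\ge\delta$, proving the dichotomy with $\delta=\min\{\theta/(9\,c(n,2)^2),\,\pi\}$.

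Finally, for the explicit value: when $n\ge 4$, \eqref{condef} gives $c(n,2)^2=\tfrac14\cdot\tfrac{(n-2)\omega_{n-2}}{n\omega_n}$, and the identity $n\omega_n/\omega_{n-2}=2\pi$ (from $\omega_k=\pi^{k/2}/\Gamma(k/2+1)$) yields $\theta/(9c(n,2)^2)=\tfrac{4\theta}{9(n-2)}\cdot\tfrac{n\omega_n}{\omega_{n-2}}=\tfrac{8\pi\theta}{9(n-2)}$; when $n=3$, \eqref{condef2} with $\omega_4=\pi^2/2$ gives $c(3,2)^2=1/(4\pi)$, hence $\theta/(9c(3,2)^2)=\tfrac{4\pi\theta}{9}$. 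Since $\theta\le 1$, both are $<\pi$, so the minimum equals the stated constant. The only genuinely new point beyond the bookkeeping of Lemma~\ref{lemm1} is the small-$r$ asymptotics $A(B(x,r)\cap\Sigma)/r^2\to\pi$ at an interior point (where the interiority of $x$ and the smooth convex structure of $\Sigma$ are used) together with the observation that the boundary term of Theorem~\ref{SI} is exactly the length term appearing in \eqref{defina}; I expect this to be the main thing to verify carefully.
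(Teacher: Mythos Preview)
Your proposal is correct and follows essentially the same route as the paper: assume $M(x,R)<\delta$, plug the linear cut-off into Theorem~\ref{SI} (with $b_0=b_1=0$), let $s\downarrow 0$ to obtain the differential inequality for $A(r)$, compare with $v(r)=\delta r^2$, and run the first-contact argument to get $\kappa(x,R)\ge\delta$ with $\delta=\min\{\theta/(9c(n,2)^2),\pi\}$. Your explicit evaluation of the constants via $n\omega_n/\omega_{n-2}=2\pi$ and $\omega_4=\pi^2/2$ is a welcome addition that the paper omits; one small quibble is that the Lipschitz continuity of $A(r)$ should be justified via a local Ricci lower bound on $\Sigma$ (or, as the paper does, via convexity together with the Lipschitz property of the full ball volume from Lemma~\ref{lemm1}) rather than $\mathrm{Ric}_N\ge 0$.
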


\begin{proof}[Proof of Lemma~\ref{lemb}]
For any interior point $x\in\Sigma$ and $R>0$, suppose that $M(x,R)<\delta$
for some fixed constant $\delta>0$. That is, for all $r\in(0,R]$,
\begin{equation}\label{equ1b}
\int_{B(x,r)\cap\Sigma}|H|dv_\Sigma+L\left(B(x,r)\cap\partial\Sigma\right)<\delta r.
\end{equation}
For fixed interior point $x\in\Sigma$, $A(r):=A(B(x,r)\cap\Sigma)$ is a
locally Lipschitz function of $r>0$. Indeed, since $\Sigma$ is convex,
for any $\Delta r\in \mathbb{R}$, we see that
\begin{equation*}
\begin{aligned}
|A(r+\Delta r)- A(r)|:=&|A(B(x,r+\Delta r)\cap\Sigma)-A(B(x,r)\cap\Sigma)|\\
=&\left|A\left(\big(B(x,r+\Delta r)\backslash B(x,r)\big)\cap\Sigma\right)\right|\\
\le& |A(B(x,r+\Delta r)- A(B(x,r))|.
\end{aligned}
\end{equation*}
Also, we know $A(B(x,r))$ is locally Lipschitz of $r>0$ in the proof of
Lemma~\ref{lemm1}. Combining two facts, we immediately conclude that
$A(r)$ is a locally Lipschitz function of $r>0$. In particular, $A(r)$ is
differentiable for almost all $r>0$. For such $r\in(0,R]$ and any $s>0$, we
introduce a cut-off function $h$ on $\Sigma$ satisfying
\begin{equation}\label{constrb}
h(y)=\left\{
\begin{aligned}
1, \qquad & y\in B(x,r)\cap\Sigma,\\
1-\frac{1}{s}\left(d_\Sigma(x,y)-r\right),\qquad &y\in (B(x,r+s)\cap\Sigma)\setminus (B(x,r)\cap\Sigma),\\
0, \qquad & y\not\in B(x,r+s)\cap\Sigma.
\end{aligned}
\right.
\end{equation}
Since $\Sigma$ is convex, distance function $d_\Sigma(x,)$ is differentiable
almost everywhere. So $h$ is differentiable almost everywhere on
$\Sigma$. Substituting $h$ into Theorem \ref{SI}, we get
\begin{equation*}
\begin{aligned}
A(r)^{1/2}&\le\left(\int_{\Sigma}h^2dv_\Sigma\right)^{1/2}\\
&\le c(n,2)\theta^{-\frac 12}\left(\frac{A(r+s)-A(r)}{s}
+\int_{B(x,r+s)\cap\Sigma}|H|dv_\Sigma+L\left(B(x,r)\cap \partial\Sigma\right)\right),
\end{aligned}
\end{equation*}
where $c(n,2)$ is defined by \eqref{condef} if $n\ge 4$ and \eqref{condef2}
if $n=3$. Letting $s\downarrow 0$ and combining \eqref{equ1b} gives
\begin{equation}\label{budesh1b}
\frac{dA}{dr}+\delta r-
c(n,2)^{-1}\theta^{\frac 12}A(r)^{\frac{1}{2}}>0.
\end{equation}

On the other hand, we define another smooth function $v(r):=\delta r^2$,
where $0<\delta<\pi$ is a small constant. Direct computation gives
\[
\frac{dv}{dr}+\delta r-c(n,2)^{-1}\theta^{\frac 12}v(r)^{\frac{1}{2}}
=\left(3\delta-c(n,2)^{-1}\theta^{\frac 12}\delta^{\frac{1}{2}}\right)r.
\]
So, if $\delta=\min\{\tfrac{\theta}{9\, c(n,2)^2},\,\, \pi\}$,
then
\begin{equation}\label{budesh2b}
\frac{dv}{dr}+\delta r-c(n,2)^{-1}\theta^{\frac 12}v(r)^{\frac{1}{2}}\le 0.
\end{equation}

Since $\frac{A(r)}{r^2}\rightarrow\pi$ as $r\downarrow0$, while
$\frac{v(r)}{r^2}=\delta<\pi$, this implies $A(r)>v(r)$ as $r\downarrow0$.
Combining this with \eqref{budesh1b} and~\eqref{budesh2b}, we claim that
$A(r)>v(r)$ for all $r\in (0,R]$. Otherwise, there exists $r_0\in (0,R]$
such that $A(r_0)=v(r_0)$ and $A(r)>v(r)$ for all $r\in(0,r_0)$. Then
from~\eqref{budesh1b} and~\eqref{budesh2b}, we can derive
$\tfrac{dA}{dr}|_{r=r_0}>\tfrac{dv}{dr}|_{r=r_0}$ and hence
\[
\frac{dA}{dr}>\frac{dv}{dr}
\]
in any sufficiently small neighborhood of $r_0$, which is impossible since
$A(r_0)=v(r_0)$ and $A(r)>v(r)$ for all $r\in(0,r_0)$. Hence the
claim follows. Thus,
\[
\kappa(x,R):=\inf_{r\in(0,R]}\frac{A(B(x,r)\cap\Sigma)}{r^2}
\ge\inf_{r\in(0,R]}\frac{v(r)}{r^2}=\delta,
\]
and the lemma follows.
\end{proof}

With the help of Lemma~\ref{lemb}, we are able to prove Theorem \ref{Main2}
using a similar argument of Theorem \ref{Main}.
\begin{proof}[Proof of Theorem \ref{Main2}]
Let $\delta$ be the fixed constant given in Lemma~\ref{lemb}. Since $\Sigma$
is compact, then $A(\Sigma)<+\infty$. So, we can choose $R>0$ sufficiently
large so that $A(\Sigma)<\delta R^2$. In particular, for any point
$p\in\Sigma\setminus\partial\Sigma$,
\[
\kappa(p,R)\le \frac{A(B(p,R)\cap\Sigma)}{R^2}\le\frac{A(\Sigma)}{R^2}<\delta.
\]
Then by Lemma~\ref{lemb}, we get $M(p,R)\ge\delta$. This shows that
there exists $s=s(p)>0$, where $s\le R$, such that
\[
\delta\le s^{-1}
\left(\int_{B(p,s)\cap\Sigma}|H|dv_\Sigma+L(B(p,s)\cap \partial\Sigma)\right).
\]
Namely,
\begin{equation}\label{intineb}
s(p)<\delta^{-1}\left(\int_{B(p,s(p))\cap\Sigma}|H|dv_\Sigma+L(B(p,s(p))\cap \partial\Sigma)\right).
\end{equation}

On the other hand, since $\Sigma$ is compact, by an approximate argument,
we may assume $p_1$ and $p_2$ are two interior points of $\Sigma$ such that $d_{int}(\Sigma)=dist_\Sigma(p_1,p_2)$. Indeed, if $p_1, p_2\in \partial\Sigma$,
there exist interior points ${p_1}_i$ and ${p_2}_i$ such that ${p_1}_i\to p_1$ and
${p_2}_i\to p_2$. So $dist_\Sigma(p_1,p_2)=\lim_{i\to\infty}({p_1}_i,{p_2}_i)$.
Let $\gamma$ be a shortest geodesic connecting $p_1$ and $p_2$. Since $\Sigma$ is
convex, then $\gamma\subset \Sigma\setminus\partial\Sigma$ and it is covered by balls $\left\{B(p,s(p))~|~p\in\gamma\right\}$ of $\Sigma\setminus\partial\Sigma$.
By Lemma \ref{cover}, there exists
a countable (possibly finite) set of points $\left\{p_i\in\gamma\right\}$ such
that geodesic balls $\{B(p_i,s(p_i))\}$ are disjoint and
\[
\rho\,d_{int}(\Sigma)=\rho\,dist_\Sigma(p_1,p_2)\le \sum_i 2s(p_i).
\]
Combining this with \eqref{intineb},
\begin{equation}
\begin{aligned}\label{preciseinb}
d_{int}(\Sigma)&\le\frac{2}{\rho}\sum_is(p_i)\\
&<\frac{2}{\rho}\delta^{-1}\sum_i
\left(\int_{B(p_i,s(p_i))\cap\Sigma}|H|dv_\Sigma+L(B(p_i,s(p_i))\cap \partial\Sigma)\right)\\
&\le \frac{2}{\rho}\delta^{-1}
\left(\int_{\Sigma}|H|dv_\Sigma+L(\partial\Sigma)\right),
\end{aligned}
\end{equation}
where $\delta$ is chosen as in Lemma~\ref{lemb}.
Letting $\rho\to\frac{1}{2}-$ proves the theorem.
\end{proof}

\textbf{Note added in theorems}.
In March 2023, Ma-Wu \cite{MW} extended the Brendle's Sobolev inequality
under a weaker curvature condition of ambient space. In fact they showed
that Brendle's Sobolev type inequality could hold when the $k$-th intermediate
Ricci curvature of ambient space is nonnegative. Using this, we can similarly
prove Theorems \ref{Main} and \ref{Main2} under their curvature condition.

\textbf{Acknowledgements}.
The author thanks the anonymous referees for making valuable comments and suggestions
which helped to improve the presentation of this work.

\end{document}